\newtheorem {theorem}{Theorem}[section]
\newtheorem {lemma} [theorem] {Lemma}
\newtheorem {proposition} [theorem] {Proposition}
\newtheorem {notation} [theorem] {Notation}
\theoremstyle{definition}
\newtheorem{definition}[theorem]{Definition}
\newtheorem{remark}[theorem]{Remark}
\newtheorem{example}[theorem]{Example}
\begin{document}
\title[]{Bounds on degrees of covers with injective monodromy in iterated Kodaira Fibrations}
\author{Kejia Zhu}
\address{Department of Mathematics, Statistics and Computer Science, University of Illinois at Chicago}
\email{kzhu14@uic.edu}

\date{}
\maketitle

\begin{abstract}
Let $\pi:X\to Y$ be an $n$-dimensional iterated Kodaira fibration with fiber of genus $g$ and injective monodromy. In \cite{Py}, it is proved that we can pass to a finite index subgroup of $\pi_1(X)$ to get the base space of an n+1-dimensional iterated Kodaira fibration with injective monodromy and they asked about bounding the index of such a group. We provide a bound on this index.

\end{abstract}

\section{Introduction:}

We say that an $n$-dimensional compact complex manifold $X$ is an \emph{iterated Kodaira fibration} if there exists a holomorphic submersion $\pi: X\to Y$ with connected fibers which is not isotrivial, where $Y$ is an $(n-1)$-dimensional iterated Kodaira fibration. In particular, when $n=2$, $\pi: X\to Y$ is a Kodaira fibration and $Y$ is a Riemann surface. E.Y. Miller in \cite{Miller86thehomology} showed that one can construct iterated Kodaira fibrations in all dimensions.\\

Given an $n$-dimensional compact complex manifold $X$, consider a non-isotrivial fiber bundle $$F\to X\to Y,$$ where $Y$ is $(n-1)$-dimensional. Since $F$ is closed in $X$, as the preimage of a closed point, $F$ is compact. By the holomorphic version of the regular value theorem, $F$ is a complex manifold of dimension $1$, thus a compact Riemann surface. As in the case of Kodaira fibration, the genus of the fiber $g(F)>2$, see \cite[Theorem 1.1]{kas1968deformations} and \cite[\S14]{barth2015compact} for details.\\

\begin{definition}\cite[Definition 36]{Py}
	A group $G$ is a \emph{polysurface group of length $n$} if there exists a filtration $1=G_0<G_1<...<G_n=G$, so that for each $1\le i\le n$, $G_i$ is a normal subgroup of $G$ and $G_i/G_{i-1}$ is isomorphic to a surface group.
\end{definition}

Historically, F.E.A Johnson in \cite{johnson1994group} proved the following theorem:

\begin{theorem}
	Let $G$ be a group which can be realized in two distinct ways as a polysurface group of length $2$. Assume that at least one of these realizations has infinite monodromy. Then the monodromy homormophism associated to any realization of $G$ as a polysurface group of length $2$ is injective.
\end{theorem}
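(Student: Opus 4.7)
The plan is to show that under the hypothesis that one realization, say $1 < A < G$, has infinite monodromy, the centralizer $Z := C_G(A)$ is trivial; since surface groups of genus $\geq 2$ are centerless, this is equivalent to injectivity of the monodromy $G/A \to \mathrm{Out}(A)$. The symmetry developed along the way will force $C_G(B) = 1$ for the second filtration $1 < B < G$ at the same time; the extension to any third realization is then automatic, since injective monodromy implies infinite image and one can reapply the two-realization statement.

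First I would analyze $H := A \cap B$, which is normal in $G$. If $H = 1$, then $A$ and $B$ commute elementwise, so $B \subseteq C_G(A)$. Using the classical fact that any finitely generated normal subgroup of a surface group of genus $\geq 2$ is either trivial or of finite index, applied to the non-trivial $A/H \cong AB/B \triangleleft G/B$, we obtain that $AB$ has finite index in $G$. Then the kernel $C_G(A)A/A$ of the monodromy of $A$ contains the finite-index subgroup $BA/A$ of $G/A$, forcing the monodromy to be finite and contradicting the hypothesis. So $H \neq 1$, and as a non-trivial normal subgroup of infinite index in the surface group $A$, it is free; the same structure theorem forces $H$ to be infinitely generated, hence non-abelian.

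Next I would establish the identification $C_G(A) = C_G(H) = C_G(B)$. Since surface groups are CSA, the centralizer in $A$ of any non-abelian subgroup is trivial, so $C_A(H) = C_B(H) = 1$. Applying the commutator relation $[M,N] \subseteq M \cap N$ for normal subgroups $M, N$ of $G$ to $M = C_G(H)$ and $N = A$ gives $[C_G(H), A] \subseteq C_A(H) = 1$, so $C_G(H) \subseteq C_G(A)$; the reverse inclusion is immediate from $H \subseteq A$. Symmetrically $C_G(H) = C_G(B)$. Writing $Z$ for this common group, $Z$ is normal in $G$, intersects both $A$ and $B$ trivially, and commutes with $AB$.

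Suppose for contradiction that $Z \neq 1$. The injection $Z \hookrightarrow G/A$ identifies $Z$ with a normal subgroup of infinite index in the surface group $G/A$ (the infinite-index condition is precisely the infinite monodromy hypothesis), so by the structure theorem $Z$ is infinitely generated free, and in particular centerless. But $Z$ commutes with the finite-index subgroup $AB$, so $C_G(Z)$ has finite index in $G$; then the map $Z \hookrightarrow G/C_G(Z)$ with kernel $Z \cap C_G(Z) = Z(Z) = 1$ would embed an infinite group into a finite quotient, a contradiction. I expect the main obstacle to be applying cleanly the two structural facts about surface groups of genus $\geq 2$ — the trivial-or-finite-index dichotomy for finitely generated normal subgroups, and the CSA property — which are classical but sit at the heart of every step.
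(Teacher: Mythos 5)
This theorem is not proved in the paper at all: it is quoted as background and attributed to Johnson \cite{johnson1994group}, so there is no in-paper argument to compare against. Judged on its own, your centralizer-based proof is sound and is in the same spirit as Johnson's original argument: the reduction of injectivity of the monodromy $G/A\to\Out(A)$ to the vanishing of $C_G(A)$ (using that $A$ is centerless), the dichotomy for finitely generated normal subgroups of a surface group, and the CSA-type fact that the centralizer of a non-abelian subgroup of a surface group is trivial are exactly the right tools, and the chain $C_G(A)=C_G(A\cap B)=C_G(B)$ via $[M,N]\subseteq M\cap N$ for normal subgroups is correct. The final contradiction (an infinite centerless normal $Z$ whose centralizer contains a finite-index subgroup embeds into a finite quotient) is clean.

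Two spots need an extra line each. First, you assert that $H=A\cap B$ has infinite index in $A$ without justification; what you actually need is only that $H$ is non-abelian, and that holds in either case (finite index gives a surface group, infinite index gives a non-cyclic free group), so either supply the short argument that finite index would force $A=B$ against distinctness, or just observe that non-abelianness is all that is used. Second, in the final step you invoke ``the finite-index subgroup $AB$,'' but you only established $[G:AB]<\infty$ in the discarded case $H=1$. In the main case $H\neq 1$ you must re-derive it: $AB/A\cong B/H$ is a finitely generated normal subgroup of the surface group $G/A$, and it is non-trivial because $B\subseteq A$ would make $A/B$ a non-trivial finitely generated normal subgroup of $G/B$, hence of finite index, contradicting the infiniteness of $G/A$; the dichotomy then gives $[G/A:AB/A]<\infty$. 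With these two lines added, the proof is complete. (You should also state explicitly that all surface groups here have genus at least $2$, which is automatic in the Kodaira-fibration context but is what makes the centerless, CSA, and normal-subgroup facts available.)
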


\noindent The above result implies that double Kodaira fibrations (i.e. fibrations admit two different Kodaira fibrations) have injective monodromy and also enabled the first embedding of surface groups in mapping class groups. Llosa Isenrich and Py proved the following theorem by developing Miller's construction of iterated Kodaira fibration from \cite{Miller86thehomology}:

\begin{theorem}\cite[Theorem 6]{Py}
	For each $n\ge 2$, there exists an $n$-dimensional iterated Kodaira fibration
	$\pi: X\to Y$ with fiber $F$, such that the monodromy representation $\pi_1(Y)\to\text{Mod}(F)$ is injective.
\end{theorem}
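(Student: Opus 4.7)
I would proceed by induction on $n$. For the base case $n = 2$, it suffices to exhibit a single Kodaira fibration whose monodromy into the mapping class group of the fiber is injective. The cleanest route is to produce a double Kodaira fibration, that is, a complex surface admitting two distinct non-isotrivial holomorphic submersions to Riemann surfaces, with at least one of the two monodromies having infinite image, and then invoke Johnson's Theorem 1.2 to upgrade infiniteness to injectivity for both fiberings. Explicit double Kodaira fibrations obtained from variants of the Atiyah--Kodaira construction realize this.

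For the inductive step, suppose $\pi \co X \to Y$ is an $n$-dimensional iterated Kodaira fibration with fiber $F$ of genus $g$ and injective monodromy $\rho \co \pi_1(Y) \hookrightarrow \Mod(F)$. A natural building block for an $(n+1)$-dimensional fibration is the fiber product $X \times_Y X \to X$: this is automatically a holomorphic submersion with $F$-fibers and smooth total space of dimension $n+1$, and is non-isotrivial because $\pi$ is. Following the Miller and Llosa Isenrich--Py construction, I would then pass to a suitable finite unramified cover $X' \to X$ (preserving the iterated Kodaira structure) and form a branched cover $\widetilde X \to X' \times_Y X'$ ramified along a divisor supported on the diagonal section; the cover $X'$ is arranged so that the branch divisor becomes divisible in $\mathrm{Pic}(X' \times_Y X')$, and the branching enlarges the fiber from $F$ to a cover $\widetilde F$ of $F$, yielding an $(n+1)$-dimensional iterated Kodaira fibration $\widetilde \pi \co \widetilde X \to X'$.

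The main obstacle is forcing the new monodromy $\pi_1(X') \to \Mod(\widetilde F)$ to be injective. The naive monodromy of $X' \times_Y X' \to X'$ factors through $\pi_*\co \pi_1(X') \to \pi_1(Y)$ and therefore kills the whole subgroup $\pi_1(F) \leq \pi_1(X')$; the point of taking the branched cover is to land in a richer mapping class group controlled by the Birman exact sequence
\begin{equation*}
1 \to \pi_1(F) \to \Mod(F, \ast) \to \Mod(F) \to 1,
\end{equation*}
in such a way that, after passing to $X'$, the restriction of the new monodromy to the fiber subgroup $\pi_1(F)$ agrees with the tautological injection $\pi_1(F) \hookrightarrow \Mod(F, \ast)$, while on the quotient it recovers $\rho$. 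A five-lemma chase comparing the surface extension for $\pi_1(X')$ with this sequence then promotes the injectivity of $\rho$ to injectivity of the new monodromy.

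The hard step is balancing the divisibility requirement for the branched cover against the compatibility required by the five-lemma argument and the preservation of non-isotriviality and submersivity. Each of these conditions may separately force a finite cover of $X$, and the cover $X'$ must simultaneously satisfy all of them; the combinatorics of arranging this is what produces the finite index subgroup of $\pi_1(X)$ appearing in the statement of \cite{Py}, and quantifying the resulting index is precisely the concern of the main theorem of this paper.
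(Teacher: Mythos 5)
This statement is quoted from \cite{Py} as background: the present paper does not prove it, it only reviews the relevant construction in its final section. Your sketch reconstructs essentially the same strategy as the cited proof (a double Kodaira fibration plus Johnson's theorem for the base case $n=2$; fiber product, passage to a finite cover, branched cover, and a Birman-exact-sequence diagram chase for the inductive step), with the minor caveat that in the actual construction the branch divisor is not the diagonal itself but the union of two disjoint graphs of a covering map (one twisted by a fixed-point-free involution), which is what makes the required divisibility in the Picard group achievable.
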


\noindent The construction of this theorem provides a higher-dimensional version of Johnson's embedding result. In \cite[Theorem 42]{Py}, starting with an $n$-dimensional Kodaira fibration $X\to Y$ with injective monodromy and fibers of genus $g$, Llosa Isenrich and Py constructed an $n+1$-dimensional Kodaira fibration with injective monodromy by passing $X$ to a finite cover $X_2$. Then they asked the degree of the finite cover $X_2\to X$.\\

The first result of this paper (see Proposition \ref{pro1}) gives an effective version with recursive formula of the following proposition from\cite{Py}:

\begin{proposition}\cite[Proposition 37]{Py}
	Let $G$ be a polysurface group and let $A$ be a finite abelian group. Then any central extension $$1\to A\to \Gamma\to G\to1$$ becomes trivial after passing to a finite index subgroup of $G$.
\end{proposition}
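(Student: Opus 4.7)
The plan is to argue by induction on the length $n$ of the polysurface filtration $1 = G_0 < G_1 < \cdots < G_n = G$. For the base case $n = 1$, $G = \pi_1(\Sigma)$ for a closed orientable surface of genus at least two, so $H^2(G; A) \cong A$, and the restriction map along any connected $d$-fold cover is multiplication by $d$; a cover of degree $|A|$ therefore kills any extension class.

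For the inductive step, set $N = G_{n-1}$ (a polysurface group of length $n-1$) and $S = G/N$ (a surface group), and let $c \in H^2(G; A)$ be the class of the given extension $1 \to A \to \Gamma \xrightarrow{q} G \to 1$. First I would apply the inductive hypothesis to $N$ to produce a finite index subgroup $M \le N$ with $c|_M = 0$; after intersecting with its finitely many $G$-conjugates one may assume $M \triangleleft G$. The vanishing $c|_M = 0$ means the extension splits on $q^{-1}(M)$, so fix such a splitting and let $\tilde M \le \Gamma$ be the resulting lift of $M$.

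The key step is to arrange that $\tilde M$ be normal in a suitable pullback. The set of splittings of $q^{-1}(M) \to M$ is a torsor over $\mathrm{Hom}(M, A) = H^1(M; A)$, which is finite since $M$ is finitely generated and $A$ is finite. Conjugation by $\Gamma$ acts on this torsor; because $A$ is central and inner automorphisms of $M$ act trivially on $H^1(M; A)$, the action factors through a $G/M$-action on a finite set. Consequently the stabilizer of $\tilde M$ has finite index in $G/M$, and its preimage $G^\flat \le G$ is a finite index subgroup containing $M$ on which $\tilde M$ is normal in $q^{-1}(G^\flat)$. Taking the quotient by $\tilde M$ yields a new central extension $1 \to A \to q^{-1}(G^\flat)/\tilde M \to G^\flat/M \to 1$, with class $\bar c \in H^2(G^\flat/M; A)$.

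To complete the argument I would kill $\bar c$: the quotient $G^\flat/M$ is finite-index in $G/M$, which is an extension of the surface group $S$ by the finite group $N/M$ and is therefore virtually a surface group. By residual finiteness (which holds because any virtually surface group is residually finite), I would pass to a further finite index subgroup meeting $N/M$ trivially and thus embedding in $S$ as a finite index surface subgroup. Applying the base case there kills $\bar c$, and pulling back produces a finite index subgroup of $G$ on which the original $c$ vanishes; the final splitting is built from $\tilde M$ together with the section provided by the killed quotient extension. I expect the main obstacle to be the torsor-of-lifts step, where finiteness of $H^1(M; A)$ has to be exploited together with a careful check that inner automorphisms act trivially on the torsor; everything else is then a diagram chase reducing to the surface group base case.
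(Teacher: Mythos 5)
Your induction runs in the opposite direction from the paper's: you peel off the top quotient surface group $S=G/G_{n-1}$ and apply the inductive hypothesis to $N=G_{n-1}$, whereas the paper peels off the bottom surface group $G_1$ (killing the extension there via the base case and passing to the normalizer of a lift) and applies the inductive hypothesis to finite-index subgroups of $G/G_1$. Most of your steps are sound: the finiteness of the set of lifts of $M$ as a torsor over $\mathrm{Hom}(M,A)$, the finite-index normalizer $G^\flat$, and the passage to the quotient central extension over $G^\flat/M$ all work, and they are the exact analogues of Lemmas \ref{le1}--\ref{le2} and the quotient-by-$i(G_1')$ step in the paper.

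There is, however, one genuine gap: the claim that $G/M$, being an extension of the surface group $S$ by the finite group $N/M$, ``is therefore virtually a surface group,'' justified ``by residual finiteness (which holds because any virtually surface group is residually finite).'' This is circular --- you are using that $G/M$ is virtually a surface group to prove that it is virtually a surface group --- and the underlying general fact is false: a (finite)-by-(residually finite) group need not be residually finite (Deligne's central extensions of $\mathrm{Sp}(2n,\mathbb Z)$ by $\mathbb Z/2$ give finite central subgroups contained in every finite-index subgroup). So finding a finite-index subgroup of $G^\flat/M$ meeting $N/M$ trivially genuinely requires an argument. The repair is exactly the maneuver the paper performs at the corresponding point: pass to the kernel $C$ of the conjugation action $G/M\to\mathrm{Aut}(N/M)$, which has finite index and in which $(N/M)\cap C=Z(N/M)$ is a finite abelian \emph{central} subgroup with $C/((N/M)\cap C)$ a finite-index subgroup of $S$; then apply your base case (a central extension of a surface group by a finite abelian group splits over a finite-index subgroup) to split off $Z(N/M)$, producing the desired finite-index subgroup embedding in $S$. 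With that insertion your argument closes up; note also that this step is precisely where the paper's recursion picks up the extra factors of $e(A)$ and the second recursive term, so your route would yield bounds of a similar recursive shape if made quantitative.
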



Before we state the result, we need to define some notations:

\begin{notation}
 Given a finite abelian group $A$, let $e(A)$ be the exponent of $A$ and $r(A)$ be a generating set of $A$ of minimal size. If $G$ is a polysurface group of length $n$ with filtration $$1=G_0<G_1<...<G_n=G,$$ let $g_1(G)$ be the genus of the surface group $G_1$.
\end{notation}

The following proposition is an effective version of \cite [Proposition 37]{Py}.\\

\begin{proposition}\label{pro1}
    Let $G$ be a polysurface group of length $n$ and let $A$ be a finite abelian group. There exist:\\
	(1) A subgroup $J$ of the polysurface group $G/G_1$ of index at most
	$$|A|^{2e(A) g_1(G)}\cdot e(A)^2(e(A)!)^{2g_1(G)+|r(A)|}$$ 
	and \\
	(2) A subgroup $K$ of $J$ of index at most $I_{n-1}(A, J)$, where the numbers $I_i(A,\bullet)$ are defined recursively by:\\
	$I_1(A,G)=e(A)$ and if $n>1$ then $$I_n(A,G)=e(A)^3|A|^{2e(A) g_1(G)}(e(A)!)^{2g_1(G)+|r(A)|}\cdot I_{n-1}(A,K)\cdot I_{n-1}(A,J);$$ so that for any central extension $$1\to A\to\Gamma\xrightarrow{\phi} G\to 1,$$ there exists some finite index subgroup $W< G$ so that $[G:W]\le I_n(A,G)$ and the induced extension is trivial.
\end{proposition}

\begin{remark}\label{a}
	 The terms $I_{n-1}(A,J)$ and $I_{n-1}(A,K)$ in the formula for $I_n(A,G)$ can be computed recursively, using Lemma \ref{genus} to bound the genus of the surfaces in their polysurface filtrations. For example, $$I_{1}(A,J)=I_{1}(A,K)=e(A);$$ 
	$$I_{2}(A,J)=e(A)^5|A|^{2e(A) g_1(J)}(e(A)!)^{2g_1(J)+|r(A)|};$$ 
	$$I_{3}(A,J)=e(A)^{13}|A|^{2e(A)\cdot (g_1(J)+g_a(J)+g_b(J))}\cdot(e(A)!)^{6|r(A)|+ (g_1(J)+g_a(J)+g_b(J))},$$ where $$g_a(J)<e(A)^2|A|^{2e(A) g_1(J)}(e(A)!)^{2g_1(J)+|r(A)|}\cdot (g(J_2/J_1)-1)+1,$$ $$g_b(J)<e(A)^7|A|^{2e(A) (g_1(J)+g_a(J))}(e(A)!)^{4|r(A)|+(g_1(J)+g_a(J))} \cdot (g(J_2/J_1)-1)+1.$$ The formula for $I_4(A,J)$ is very complicated, and we omit it.
\end{remark}

The second result (see Theorem \ref{prop}) of this paper answers Llosa Isenrich and Py's question regarding the degree of $X_2\to X$ mentioned earlier with an upper bound. Before we state the result, we need to give the following notations:

\begin{notation}
	For convenience, in the following, we define $$\tau_0(g):=4g+2^{4g}(g-1)+1$$ and define $$\tau(g):=2^{\tau_0}.$$
\end{notation}	
	
Next we are going to introduce three finite covers of $Y$, namely $Z_1,Z_2$ and $Z_3$. The definitions of the three covers involve three homomorphisms $\mu_a,\rho$ and $\mu$ defined in the Section 4. \\

Now we can give the definitions of the covers: $Z_1:=\ker\mu_a$, of degree at most $\prod_{i=1}^{2g-1} (2^{2i}-1)2^{2i-1})$; $Z_2:=\ker\rho$ of degree at most $\prod_{i=1}^{\tau_0(g)} (\tau(g)-2^{i-1})$ and $Z_3:=\ker\mu$ of degree at most $\prod_{i=1}^{g} (2^{2i}-1)2^{2i-1}$. Note since $\pi_1(Z_i)$ ($i=1,2,3$) is a finite index subgroup of $\pi_1(Y)$, it is a polysurface group of length $n-1$. Also note Lemma \ref{genus} allows us to calculate the genera of the surfaces for the $Z_i$ in terms of the degree of the covering and the surfaces for $Y$.

\begin{notation}
	We define $$\kappa_1(n-1):=I_{n-1}((\mathbb Z/2\mathbb Z)^{2g},Z_1),$$

	$$\kappa_2(n-1):=I_{n-1}((\mathbb Z/2\mathbb Z)^{\tau_0(g)},Z_2),$$  
	
	$$\kappa_3(n-1):=I_{n-1}((\mathbb Z/2\mathbb Z)^{4g-2},Z_3),$$
	
	$$\theta(g):=2^{4g-1}\cdot\prod_{i=1}^{2g-1} (2^{2i}-1)2^{2i-1})\cdot\prod_{i=1}^{g} (2^{2i}-1)2^{2i-1}\cdot \prod_{i=1}^{\tau_0(g)} (\tau(g)-2^{i-1}).$$

\end{notation}

\begin{theorem} \label{prop}
	 The finite covering $X_2\to X$ constructed above has degree at most $$ \theta(g)\cdot \kappa_1(n-1)\cdot \kappa_2(n-1)\cdot \kappa_3(n-1).$$
	
\end{theorem}

\noindent\textbf{Acknowledgements.} The author wishes to thank his advisor Daniel Groves and co-adivisor Anatoly Libgober for constant support and warm encouragement. He wishes to thank Claudio Llosa Isenrich and Pierre Py for answering his questions and helpful comments. He is also grateful to the referee for many helpful comments which improved the paper.\\

\section{Preliminaries}

\begin{lemma} 
The fundamental group of an $n$-dimensional iterated Kodaira fibration is a polysurface group of length $n$.
\end{lemma}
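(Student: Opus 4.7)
My approach is induction on $n$. For the base case $n=1$, an iterated Kodaira fibration is a compact Riemann surface of genus at least two, whose fundamental group is a surface group; the trivial filtration $1 < \pi_1(X)$ is polysurface of length one.

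For the inductive step, let $\pi \co X \to Y$ be an $n$-dimensional iterated Kodaira fibration with fiber $F$, a compact Riemann surface of genus at least two. Since $\pi$ is a proper holomorphic submersion between compact manifolds, Ehresmann's fibration theorem upgrades it to a locally trivial smooth fiber bundle. I then want to extract the short exact sequence
\begin{equation*}
1 \to \pi_1(F) \to \pi_1(X) \xrightarrow{\ p\ } \pi_1(Y) \to 1
\end{equation*}
from the associated long exact sequence in homotopy, which requires $\pi_2(Y) = 0$. This will follow from a parallel induction showing that every iterated Kodaira fibration is aspherical: the total space of a fiber bundle whose fiber and base are both aspherical is itself aspherical, and higher-genus Riemann surfaces together with $Y$ (aspherical by the inductive hypothesis) supply the input.

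Apply the inductive hypothesis to $\pi_1(Y)$ to obtain a polysurface filtration $1 = H_0 < H_1 < \cdots < H_{n-1} = \pi_1(Y)$ with each $H_i$ normal in $\pi_1(Y)$ and surface-group successive quotients. Set $G_0 = 1$, $G_1 = \ker p = \pi_1(F)$, and $G_i = p^{-1}(H_{i-1})$ for $2 \le i \le n$. Each $G_i$ is normal in $\pi_1(X)$ because $H_{i-1}$ is normal in $\pi_1(Y)$ and $p$ is surjective, so conjugation in $\pi_1(X)$ projects to conjugation in $\pi_1(Y)$. The quotient $G_1/G_0$ is the surface group $\pi_1(F)$, and for $i \ge 2$ the third isomorphism theorem gives $G_i/G_{i-1} \cong H_{i-1}/H_{i-2}$, again a surface group.

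The main technical point is the asphericity of $Y$, which is what collapses the homotopy long exact sequence to the short exact sequence above; once that is in hand, the remainder of the argument is a formal bookkeeping of the filtration obtained by pulling the one on $\pi_1(Y)$ back through $p$ and inserting $\pi_1(F)$ at the bottom.
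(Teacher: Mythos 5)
Your proof is correct and follows essentially the same route as the paper: induct on the dimension, extract the short exact sequence $1\to\pi_1(F)\to\pi_1(X)\to\pi_1(Y)\to 1$ from the fiber bundle, and build the length-$n$ filtration by putting $\pi_1(F)$ at the bottom and pulling back the filtration of $\pi_1(Y)$. The only difference is that you explicitly justify the exactness of the sequence via asphericity of the base (which the paper takes for granted); this is a welcome but minor addition.
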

\begin{proof} 
Recall a surface group is polysurface and by the defnition of $n$-dimensional iterated Kodaira fibration, there is a fiber bundle $$F\to K_n\to K_{n-1},$$ where $F$ is a hyperbolic surface and $K_n$ is the $n$-dimensional iterated Kodaira fibration. So there is a short exact sequence:
$$1\to \pi_1(F)\to \pi_1 (K_n)\to\pi_1(K_{n-1})\to 1.$$

For the $2$-dimensional iterated Kodaira fibration case, there is a fiber bundle $$F\to K_2\to E,$$ where $E,F$ are hyperbolic surfaces. Thus $K_2$ is a polysurface group of length $2$. \\

Now we claim that given a short exact sequence $$1\to F\to G\xrightarrow{\pi} G_0\to 1,$$ 
where $F$ is a surface group and $G_0$ a polysurface group of length $k$, then $G$ is a polysurface group of length $k+1$. Indeed, by assumption, there is a filtration $$1\unlhd H_1\unlhd H_2...\unlhd H_k=G_0$$ so that $H_{i+1}/H_i$ is a surface group. Now we define $F_1:=F$ and $F_i:=\pi^{-1}(H_{i-1})$, it is easy to see that $F_i\unlhd F_{i+1}$. Observe $F_{i+1}/F_i\cong H_{i+1}/H_i$, which is a surface group, thus $G$ is a polysurface group of length $k+1$.\\

By induction, the fundamental group of an $n$-dimensional iterated Kodaira fibration is a polysurface group of length $n$. 
\end{proof}

\begin{notation}
If a group $V$ is the fundamental group of a closed surface, say $\Sigma$, then we denote the genus of $\Sigma$ by $g(V)$ and denote the Euler characteristic of $\Sigma$ by $\chi(V)$.
\end{notation}

\begin{lemma}\label{genus}
	Let $G$ be polysurface group of length $n$ with filtration $1=G_0<G_1<...<G_n=G$ and $D$ be a subgroup of $G$ of finite index $d$. Then $D$ is a polysurface group of length $n$ with filtration $1=D_0<D_1<...<D_n=D$, where $D_i:=D\cap G_i$. Moreover, we have
	 $$g(D_i/D_{i-1})\le d\cdot (g(G_i/G_{i-1})-1)+1.$$ 
\end{lemma}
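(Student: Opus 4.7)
The plan is to build the filtration for $D$ step by step, verify each quotient is a surface group by identifying it with a finite-index subgroup of the corresponding surface group quotient of $G$, and then extract the genus bound from the multiplicativity of the Euler characteristic under finite covers.

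First I would set $D_i := D \cap G_i$ and check the elementary properties: $D_i$ is normal in $D$ because $G_i$ is normal in $G$ (intersection of a normal subgroup of the ambient group with any subgroup is normal in that subgroup), and clearly $D_0 = 1$, $D_n = D$. This gives the candidate filtration.

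Next I would identify $D_i/D_{i-1}$ with a subgroup of the surface group $G_i/G_{i-1}$. The second isomorphism theorem gives
\[
D_i/D_{i-1} \;=\; (D \cap G_i)/(D \cap G_i \cap G_{i-1}) \;\cong\; (D \cap G_i)\,G_{i-1}/G_{i-1},
\]
which is a subgroup of $G_i/G_{i-1}$. The index of this subgroup in $G_i/G_{i-1}$ equals $[G_i : (D\cap G_i)G_{i-1}]$, which is at most $[G_i : D \cap G_i] \le [G : D] = d$. Since $G_i/G_{i-1}$ is a (closed hyperbolic) surface group, any finite-index subgroup is again a closed surface group — it corresponds to a finite connected cover of the underlying closed surface. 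Hence each $D_i/D_{i-1}$ is a surface group, proving $D$ is polysurface of length $n$.

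For the genus estimate, let $k_i := [G_i/G_{i-1} : D_i/D_{i-1}] \le d$. Because $D_i/D_{i-1}$ corresponds to a degree-$k_i$ connected cover of the closed surface with fundamental group $G_i/G_{i-1}$, the Euler characteristic is multiplicative:
\[
\chi(D_i/D_{i-1}) \;=\; k_i \cdot \chi(G_i/G_{i-1}).
\]
Writing $\chi = 2 - 2g$ and solving gives $g(D_i/D_{i-1}) - 1 = k_i\bigl(g(G_i/G_{i-1}) - 1\bigr)$, and since $k_i \le d$ and $g(G_i/G_{i-1}) \ge 2$, we conclude
\[
g(D_i/D_{i-1}) \;\le\; d\cdot\bigl(g(G_i/G_{i-1})-1\bigr) + 1.
\]

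The only subtle point — and the step I expect to be the main technical obstacle — is ensuring the identification in the second step is clean: one must check that $(D\cap G_i)G_{i-1}$ really is a subgroup (which uses $G_{i-1}\triangleleft G_i$) and that the index bound passes from $[G:D]$ down through the two steps. Once that is in place, everything else is a direct application of the Euler characteristic formula for covers of closed surfaces.
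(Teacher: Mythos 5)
Your proof is correct and follows essentially the same route as the paper: the second isomorphism theorem identifies $D_i/D_{i-1}$ with the subgroup $D_i G_{i-1}/G_{i-1}$ of $G_i/G_{i-1}$ of index at most $d$, and the genus bound then follows from multiplicativity of the Euler characteristic under finite covers. Your write-up is in fact slightly more detailed than the paper's on the final arithmetic step.
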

\begin{proof}
	By the Second Isomorphism Theorem, 
	$$\frac{D_i}{D_{i-1}}=\frac{D_i}{D_i\cap G_{i-1}}\cong \frac{D_i\cdot G_{i-1}}{G_{i-1}}.$$
	Note $$[G_i:D_i\cdot G_{i-1}]\le [G_i:D_i]\le [G:D]=d,$$ thus $\frac{D_i}{D_{i-1}}$ is a subgroup of $\frac{G_i}{G_{i-1}}$ of index at most $d$. In particular, it is a surface group. Then $$\chi(\frac{D_i}{D_{i-1}})=[\frac{G_i}{G_{i-1}}:\frac{D_i}{D_{i-1}}]\cdot \chi(\frac{G_i}{G_{i-1}})\ge d\cdot \chi(\frac{G_i}{G_{i-1}}),$$ which implies $g(D_i/D_{i-1})\le d\cdot (g(G_i/G_{i-1})-1)+1$. 
\end{proof}

\section{An elaboration of \cite[Proposition 37]{Py}}
In this section, we prove Proposition \ref{pro1}. We start in the case of length $1$ (see Lemma \ref{le1}). Note that the length $2$ is the case of classical a Kodaira fibration.

\begin{lemma} \label{le1}
If $\mathcal G_1$ is a surface group and $1\to A\to \Gamma\to \mathcal G_1\to 1$ is a central extension, then there is a finite index subgroup $H<\mathcal G_1$ with $[\mathcal G_1:H]\le I_1(A,\mathcal G_1):= e(A)$ so that the induced extension of $H$ by $A$ is trivial. Moreover, we can take $H\unlhd \mathcal G_1$ and $\mathcal G_1/H$ to be cyclic.
\end{lemma}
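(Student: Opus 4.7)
The plan is to classify the central extension by second cohomology, reduce the problem to a computation on the closed surface $\Sigma = K(\mathcal{G}_1, 1)$, and exhibit $H$ as the kernel of an explicit surjection onto a cyclic group of order $e(A)$.

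First I would observe that since $A$ is a trivial $\mathcal{G}_1$-module, the central extension $1 \to A \to \Gamma \to \mathcal{G}_1 \to 1$ is classified by a class $[\omega] \in H^2(\mathcal{G}_1, A)$. Because $\mathcal{G}_1 = \pi_1(\Sigma)$ for a closed orientable surface $\Sigma$ of some genus $g \ge 2$ (and $\Sigma$ is a $K(\mathcal{G}_1, 1)$), we may identify $H^2(\mathcal{G}_1, A) \cong H^2(\Sigma, A)$. Evaluation against the fundamental class $[\Sigma]$ gives an isomorphism $H^2(\Sigma, A) \xrightarrow{\sim} A$, so $[\omega]$ corresponds to a single element $a \in A$.

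Next I would construct $H$. Since $\mathcal{G}_1^{\mathrm{ab}} \cong \Z^{2g}$ with $g \ge 1$, it surjects onto $\Z/e(A)\Z$; composing with the abelianization gives a surjection $\mathcal{G}_1 \twoheadrightarrow \Z/e(A)\Z$, and we take $H$ to be its kernel. Then $H \unlhd \mathcal{G}_1$ with cyclic quotient of order $e(A)$, so in particular $[\mathcal{G}_1 : H] = e(A) = I_1(A, \mathcal{G}_1)$. The associated degree-$e(A)$ connected cover $p : \tilde{\Sigma} \to \Sigma$ is again a closed orientable surface, so $H$ is a surface group.

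The heart of the argument is to check that restriction kills $[\omega]$. Under the identifications $H^2(\mathcal{G}_1, A) \cong A$ via $[\Sigma]$ and $H^2(H, A) \cong A$ via $[\tilde{\Sigma}]$, the restriction map corresponds to multiplication by the covering degree, since by naturality of the Kronecker pairing and the fact that $p_*[\tilde{\Sigma}] = e(A) \cdot [\Sigma]$ we have $\langle p^*\omega, [\tilde{\Sigma}] \rangle = \langle \omega, p_*[\tilde{\Sigma}] \rangle = e(A)\cdot a = 0$ in $A$. Hence $\mathrm{res}^{\mathcal{G}_1}_H[\omega] = 0$ and the induced extension of $H$ by $A$ splits. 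The only delicate point is this identification of restriction with multiplication by the degree; once it is in place, the statement follows immediately.
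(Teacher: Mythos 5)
Your proposal is correct and follows essentially the same route as the paper: both identify $H^2(\mathcal G_1,A)$ with $A$ (the paper via universal coefficients as $\mathrm{Hom}(H_2(\mathcal G_1,\mathbb Z),A)$, you via the Kronecker pairing with the fundamental class), both take $H$ to be the kernel of a surjection of $\mathcal G_1$ onto a cyclic group of order $e(A)$, and both conclude by observing that restriction to an index-$d$ subgroup acts as multiplication by $d$, which vanishes when $d=e(A)$. No gaps.
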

\begin{proof}
The extension corresponds to a class in $H^2(\mathcal G_1, A)$. Since $H_1(\mathcal G_1,\mathbb Z)$ is torsion-free, the universal coefficient theorem implies
$$H^2(\mathcal G_1,A)=\text{Hom}(H_2(\mathcal G_1, \mathbb Z),A).$$

Let $\mathcal G_1:=\pi_1(\Sigma)$ for some surface $\Sigma$. If $H< \mathcal G_1$ is a subgroup of index $d$, then $H=\pi_1(\Sigma_1)$, where $f:\Sigma_1\to\Sigma$ is a degree $d$ covering map inducing the inclusion $\pi_1(f):H\to \mathcal G_1$. Then the induced degree map 
$$f_*:H_2(H,\mathbb Z)\to H_2(\mathcal G_1,\mathbb Z)$$ is 
$$\mathbb Z\xrightarrow{\times d}\mathbb Z.$$
Thus $$f^*H^2(H,A)=\text{Hom}(f_*H_2(H, \mathbb Z),A)=\{\phi:x\mapsto \phi(dx)\in A)\},$$ where $\phi\in\text{Hom}(H_2(\mathcal G_1, \mathbb Z),A)$. Since $A$ is abelian, $\phi(dx)=d\phi(x)$.\\

Note we can make $H$ normal by considering a surjection from the surface group to a cyclic group of order $e(A)$. Thus we may require $\mathcal G_1/H=\mathbb Z/e(A)\mathbb Z$ and construct the corresponding $\Sigma_1$. So $$f^*H^2(H,A)=\{\phi:x\mapsto \phi(dx)=d\phi(x)=1\in A\},$$ where $\phi\in\text{Hom}(H_2(\mathcal G_1, \mathbb Z),A)$. Hence $f^*H^2(H,A)$ is trivial. Thus the corresponding extension is trivial, in other words, when restricting $\mathcal G_1$ to a subgroup of index $e(A)$ in the extension $$1\to A\to\Gamma\to \mathcal G_1\to 1,$$ there is a trivial extension. 
\end{proof}

\begin{remark}
	We have proved Proposition \ref{pro1} in case $n=1$.
\end{remark}


Let $G$ be a polysurface group of length $n$ with central extension $1\to A\to\Gamma\xrightarrow{\phi} G\to1$ and let $G_1< G$ be a normal surface subgroup of $G$. By Lemma \ref{le1}, there is a finite index normal subgroup of  $G_1$, say $G_1'$, with index at most $e(A)$, such that the extension is trivial over $G_1'$ and the quotient group $G_1/G_1'$ is cyclic. So there is a pair of short exact sequences

$$ \begin{tikzcd}
	1\arrow[r]  & A \arrow[r] &\Gamma \arrow[r,"\phi"] &G \arrow[r]& 1\\
	1\arrow[r]  & A \arrow[r]\arrow[u] &\Gamma_1'\cong A\oplus G_1' \arrow[r]\arrow[u,hook] & G_1' \arrow[r]\arrow[u,hook]& 1
\end{tikzcd}$$\\

Let $i:G_1'\to\Gamma$ be a lift of $G_1'$ and let $N_\Gamma(i(G_1'))$ be the normalizer of $i(G_1')$ in $\Gamma$.\\

\begin{lemma} \label{le2}
$N_\Gamma(i(G_1'))$ is finite index in $\Gamma$ with index less than $|A|^{2e(A) g_1(G)}\cdot e(A)(e(A)!)^{2g_1(G)+|r(A)|}$.
\end{lemma}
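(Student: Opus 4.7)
My plan is to bound $[\Gamma : N_\Gamma(i(G_1'))]$ by passing through an intermediate subgroup and then applying orbit--stabiliser on a set of lifts. Since $A$ is central in $\Gamma$, we have $A \subseteq N_\Gamma(i(G_1'))$, and for any $\gamma \in N_\Gamma(i(G_1'))$ the image $\phi(\gamma) \in G$ normalises $G_1'$. Setting $\widetilde{N} := \phi^{-1}(N_G(G_1'))$, we therefore obtain a tower $N_\Gamma(i(G_1')) \le \widetilde{N} \le \Gamma$ and the factorisation
\[
[\Gamma : N_\Gamma(i(G_1'))] = [G : N_G(G_1')] \cdot [\widetilde{N} : N_\Gamma(i(G_1'))],
\]
which I would bound factor by factor.

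For $[G : N_G(G_1')]$: because $G_1 \unlhd G$, every $G$-conjugate of $G_1'$ is a subgroup of $G_1$ of index at most $e(A)$. The standard correspondence between subgroups of index $\le e(A)$ and (pointed) transitive permutation representations into $S_{e(A)}$ bounds the number of such subgroups in a $k$-generator group by $(e(A)!)^k$; applied to the surface group $G_1$ with its $2g_1(G)$-generator presentation, this yields $[G : N_G(G_1')] \le (e(A)!)^{2g_1(G)}$. For $[\widetilde{N} : N_\Gamma(i(G_1'))]$: the group $\widetilde{N}$ acts by conjugation on the set of lifts of $G_1'$ inside $\phi^{-1}(G_1')$, and the stabiliser of $i(G_1')$ under this action is precisely $N_\Gamma(i(G_1'))$. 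As the extension restricts to a trivial one over $G_1'$, we may identify $\phi^{-1}(G_1') \cong A \oplus G_1'$, so the lifts of $G_1'$ form a torsor for $\mathrm{Hom}(G_1', A)$, a set of cardinality $|A|^{2g(G_1')}$. Lemma~\ref{genus} gives $g(G_1') \le e(A)(g_1(G)-1)+1$, and hence the orbit of $i(G_1')$ has size at most $|A|^{2e(A)g_1(G)}$.

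Multiplying the two bounds gives $[\Gamma : N_\Gamma(i(G_1'))] \le (e(A)!)^{2g_1(G)} \cdot |A|^{2e(A)g_1(G)}$, which is strictly below the stated upper bound; the extra factor $e(A)(e(A)!)^{|r(A)|}$ merely provides convenient slack, presumably inserted to keep later recursive estimates in Proposition~\ref{pro1} uniform in $A$. The main obstacle is to set up the identification $\phi^{-1}(G_1') \cong A \oplus G_1'$ carefully enough that conjugation by $\widetilde{N}$ genuinely acts on the torsor of lifts (so that orbit--stabiliser applies); after that, the counting of lifts and the application of Lemma~\ref{genus} are routine.
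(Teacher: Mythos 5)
Your argument is correct and follows essentially the same route as the paper: your intermediate subgroup $\widetilde N=\phi^{-1}(N_G(G_1'))$ coincides with the paper's $N_\Gamma(\phi^{-1}(G_1'))$, and both factors are handled by the same orbit--stabiliser counts (conjugates of $G_1'$ for the first, sections/complements of $A$ in $A\oplus G_1'$ for the second, the latter giving $|A|^{2g(G_1')}\le |A|^{2e(A)g_1(G)}$ in both treatments). The only real difference is that you count the conjugates of $G_1'$ downstairs in the surface group $G_1$ (which needs only $2g_1(G)$ generators), whereas the paper counts finite-index subgroups of the preimage $\phi^{-1}(G_1)$ (which needs $2g_1(G)+|r(A)|$ generators and Hall's recursion); this is why your first factor $(e(A)!)^{2g_1(G)}$ is sharper than the paper's $e(A)(e(A)!)^{2g_1(G)+|r(A)|}$ --- the extra factor in the statement is an artefact of counting in the larger group, not deliberate slack for the recursion as you surmise.
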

\begin{proof}
With the central extension $1\to A\to\Gamma\xrightarrow{\phi} G\to 1$, define $$\Gamma_1:=\phi^{-1}(G_1)\unlhd\Gamma$$ and $$\Gamma_1':=\phi^{-1}(G_1')\cong A\oplus i(G_1').$$ Since $[G_1:G_1']\le e(A)$, it follows that $[\Gamma_1':i(G_1')]=|A|$ and $[\Gamma_1:\Gamma_1']\le e(A)$. Define $N:=N_\Gamma(\Gamma_1')$, observe $N_\Gamma(i(G_1'))=N_N(i(G_1'))$, thus $$[\Gamma:N_\Gamma(i(G_1'))]=[\Gamma:N]\cdot [N:N_N(i(G_1'))].$$
We first consider $[\Gamma:N]$. Define an action of $\Gamma$ on the subgroups of index $[\Gamma_1:\Gamma_1']$ in $\Gamma_1$ by conjugation. Note $N=N_\Gamma(\Gamma_1')=\text{Stabilizer}_\Gamma(i(\Gamma_1'))$, hence $$[\Gamma:N] =|\text{Orbit}_\Gamma(\Gamma_1')|=|\text{Orbit}_{\Gamma_1}(\Gamma_1')|,$$
which is bounded by the number of subgroups of index $[\Gamma_1:\Gamma_1']$ in $\Gamma_1$.\\

Observe $\Gamma_1$, as a finite extension of a surface group, is finitely generated. So $\Gamma_1$ has only finitely many subgroups of a given index. Indeed, by \cite{HM}, if $N(d,n)$ is the number of subgroups of index $d$ in the free group of rank $n$, then
$$N(d,n)=d(d!)^{n-1}-\sum_{i=1}^{d-1}((d-i)!)^{n-1}N(i,n)<d(d!)^{n-1}.$$

Note there is a natural epimorphism $p:F_{|Y_n|}\to \Gamma_1$, where $Y_n$ is a generating set of $\Gamma_1$ and $F_{|Y_n|}$ is the free group with rank $|Y_n|$. Note that if $H$ is a finite index subgroup of $\Gamma_1$, then $$[F_{|Y_n|}:p^{-1}(H)]=[\Gamma_1:H].$$ So we can pull back the subgroups of index $[\Gamma_1:\Gamma_1']$ in $\Gamma_1$ to the subgroups of the same index in $F_{|Y_n|}$. So there are at most $N([\Gamma_1:\Gamma_1'],|Y_n|)$ subgroups of index $[\Gamma_1:\Gamma_1']$ in $\Gamma_1$. Since $\Gamma_1 /A = G_1$ and $G_1$ has a generating set of size $2g_1(G)$, it follows that $$|Y_n|< 2g_1(G)+|r(A)|.$$ Therefore,
\begin{align}
	[\Gamma:N] &=|\text{Orbit}(\Gamma_1')|\nonumber\\
	&\le N([\Gamma_1:\Gamma_1'],|Y_n|)\nonumber\\
	&<[\Gamma_1:\Gamma_1'])([\Gamma_1:\Gamma_1']!)^{2g_1(G)+|r(A)|-1}.\nonumber\\
	&<e(A)(e(A)!)^{2g_1(G)+|r(A)|}.\nonumber
\end{align}

Now we start to deal with $[N:N_N(i(G_1'))]$. Applying a similar argument as above to $[\Gamma:N]=[\Gamma:N_\Gamma(\Gamma_1')]$, define an action of $N$ on the subgroups of index $[\Gamma_1':i(G_1')]=|A|$ in $\Gamma_1'$ by conjugation. It follows that $$[N:N_N(i(G_1'))]=|\text{Orbit}_{\Gamma_1'}(i(G_1'))|,$$ which is bounded by the number of subgroups of $\Gamma_1'$ isomorphic to $i(G_1')$. Recall $i(G_1')$ is a surface group of genus, say $g'$. Fix an isomorphism $\Gamma_1'\cong A\oplus i(G_1')$ and
choose a generating set of $i(G_1')$ to be $$\{(0,a_1),...,(0,a_{g'}),(0,b_1),...,(0,b_{g'})\}.$$

We claim that any subgroup of $A\oplus i(G_1')$ isomorphic to $i(G_1')$ is of the form $$\langle(x_1,a_1),...(x_{g'},a_{g'}),(y_1,b_1),...,(y_{g'},b_{g'})\rangle,$$ where  $x_i,y_i$ are arbitrary in $A$.\\

Consider the projection $p:A\oplus i(G_1')\to i(G_1')$. For any subgroup $K$ of $A\oplus i(G_1')$ isomorphic to $i(G_1')$, since $\ker p|_K \subset \ker p=A$ is a finite abelian group as well as a subgroup of a surface group, it follows that $\ker p|_K$ is trivial. Recall $p(K)\cong K, i(G_1')$ are isomorphic surface groups, so $p(K)=i(G_1')$. Therefore, $K$ contains a subset of the type in the listed generating set from the statement of the claim. But any such subset generates a subgroup isomorphic to $i(G_1')$, so the result of the claim follows.\\

Now with the claim, we know the subgroups of $i(\Gamma_1')$ of index $|A|$, isomorphic to $i(G_1')$, is bounded by $|A|^{2g'}$. Recall $[G_1:G_1']\le e(A)$, thus $g'\le e(A)\cdot g_1(G)$. So $$[N:N_N(i(G_1'))]\le |A|^{2e(A) g_1(G)}.$$ Therefore, we know $$[\Gamma:N_N(i(G_1'))]<|A|^{2e(A) g_1(G)}\cdot e(A)(e(A)!)^{2g_1(G)+|r(A)|}.$$

\end{proof}


\begin{lemma}\label{3use}
	Let $G$ be a polysurface group of length $n$ with filtration $1=G_0\unlhd G_1\unlhd G_2\unlhd...\unlhd\mathcal G_n=G$. For any central extension 
	$$1\to A\to \Gamma\xrightarrow{\phi} G\to 1,$$ there is a finite index subgroup $G_1'$ of $G_1\subset G$, so that $G_1/G_1'$ is cyclic with order $e(A)$ and the induced extension $$1\to A\to \Gamma_1'\to  G_1'\to 1$$ is trivial. Moreover, the index of the subgroup $\Gamma':=N_\Gamma(i(G_1'))$ of $\Gamma$ is less than
	 $|A|^{2e(A) g_1(G)}\cdot e(A)(e(A)!)^{2g_1(G)+|r(A)|}$.
\end{lemma}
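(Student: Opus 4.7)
The plan is to assemble the statement directly from Lemma \ref{le1} and Lemma \ref{le2}, since the hypotheses of Lemma \ref{3use} package the conclusions of the two previous lemmas in a form convenient for the inductive step of Proposition \ref{pro1}. First I would restrict the given central extension $1 \to A \to \Gamma \to G \to 1$ to the normal surface subgroup $G_1 \unlhd G$, producing a central extension
\[
1 \to A \to \phi^{-1}(G_1) \to G_1 \to 1.
\]
Since $G_1$ is a surface group, Lemma \ref{le1} applies: its proof shows that for any surjection $G_1 \twoheadrightarrow \mathbb{Z}/e(A)\mathbb{Z}$ (which exists because $G_1$ is a genus $g_1(G)$ surface group with $g_1(G) \ge 1$), the kernel $G_1'$ of this surjection is normal in $G_1$, has cyclic quotient $G_1/G_1' \cong \mathbb{Z}/e(A)\mathbb{Z}$, and the restricted extension $1 \to A \to \Gamma_1' \to G_1' \to 1$ is trivial. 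Thus we may choose a lift $i \colon G_1' \to \Gamma$ whose image is a complement to $A$ inside $\Gamma_1' \cong A \oplus i(G_1')$.

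Next I would plug this particular $G_1'$ into Lemma \ref{le2}. The hypotheses of that lemma are exactly what the previous step produces: a normal surface subgroup $G_1 \unlhd G$, a finite-index normal subgroup $G_1' \unlhd G_1$ with $[G_1 : G_1'] \le e(A)$ over which the extension splits, and a chosen lift $i(G_1') \le \Gamma$. The conclusion of Lemma \ref{le2} then yields
\[
[\Gamma : N_\Gamma(i(G_1'))] < |A|^{2e(A) g_1(G)} \cdot e(A)(e(A)!)^{2g_1(G)+|r(A)|},
\]
which is precisely the bound asserted in Lemma \ref{3use}.

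There is essentially no obstacle here beyond verifying that the version of Lemma \ref{le1} we invoke can be arranged to give quotient of order exactly $e(A)$ (rather than merely some divisor of $e(A)$), which is the content of the ``moreover'' clause in Lemma \ref{le1}. The only small bookkeeping point is to remark that the bound in Lemma \ref{le2} depends on $g_1(G)$, $|r(A)|$, and $e(A)$ alone and is insensitive to the length $n$ of the polysurface filtration; this is why Lemma \ref{3use} is valid uniformly in $n$ and becomes the workhorse for the recursive step in the proof of Proposition \ref{pro1}.
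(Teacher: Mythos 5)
Your proposal is correct and follows essentially the same route as the paper: restrict the extension to $G_1$, apply Lemma \ref{le1} to obtain $G_1'$ with cyclic quotient of order $e(A)$ and a splitting giving the lift $i(G_1')$, then feed this into Lemma \ref{le2} for the normalizer index bound. Your remark that the surjection onto $\mathbb{Z}/e(A)\mathbb{Z}$ gives quotient of order exactly $e(A)$ (matching the statement of Lemma \ref{3use}) is a worthwhile clarification that the paper glosses over.
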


\begin{proof}
	By assumption $G_1$ is a surface group, thus by Lemma \ref{le1}, there is a finite index subgroup $G_1'$ of $G_1$ so that $G_1/G_1'$ is cyclic with order at most $e(A)$ and the extension restricted to $G_1'$ is trivial. The inclusion $$i: G_1'\hookrightarrow{}A\oplus G_1'\subset\Gamma$$ defines an inclusion $G_1'\hookrightarrow{}\Gamma$. By Lemma \ref{le2}, $\Gamma'$ is of finite index in $\Gamma$, with index less than $|A|^{2e(A) g_1(G)}\cdot e(A)(e(A)!)^{2g_1(G)+|r(A)|}$.
\end{proof}

\begin{proof}[Proof of Proposition \ref{pro1}:]
The proof is by induction. Lemma \ref{le1} is the length $1$ case. Now suppose the length is $n>1$. By Lemma \ref{3use}, there is a finite index subgroup $G_1'$ of $G_1$ so that $G_1/G_1'$ is cyclic, the induced extension $$1\to A\to \Gamma_1'\to  G_1'\to 1$$ is trivial and there is a subgroup $\Gamma':=N_\Gamma(i(G_1'))$ of $\Gamma$ so that $$[\Gamma:\Gamma']<|A|^{2e(A) g_1(G)}\cdot e(A)(e(A)!)^{2g_1(G)+|r(A)|}.$$ Now define $G':=\phi(\Gamma')$ and consider the induced extension:
$$1\to A\to \Gamma'\to G'\to 1.$$

By taking the quotient by $i(G_1')$, we obtain a central extension $$1\to A\to \Gamma'/i(G_1')\to G'/G_1'\to 1.$$

We claim that $G'/G_1'$ is an extension of a polysurface group of length $n-1$ by a finite abelian group. First, there is a short exact sequence
$$1\to (G_1\cap G')/G_1'\to G'/G_1' \xrightarrow{h} G'/(G_1\cap G')\to 1.$$
Note $$(G_1\cap G')/G_1'<G_1/G_1',$$ so $(G_1\cap G')/G_1'$ is a finite abelian group. Observe $G'/(G_1\cap G')$ is a finite index subgroup of the polysurface group of length $n-1$, $G/G_1$: since $G_1\cap G'$ is the kernel of $G'\to  G/G_1$ and $G'/(G_1\cap G')$ is the image of $G'$ under $G\to  G/G_1$, it follows that $$G'/(G_1\cap G')<G/G_1.$$ Moreover, since $G'$ is of finite index in $G$, its image $G'/(G_1\cap G')$ is of finite index in $G/G_1$. More precisely,

  \begin{align}
  	
  	[G/G_1:G'/(G_1\cap G')]&\le [G/(G_1\cap G'):G'/(G_1\cap G')]\nonumber\\
  	&=[G:G']\nonumber\\
  	&=[\Gamma:\Gamma']\nonumber\\
  	&<|A|^{2e(A) g_1(G)}\cdot e(A)(e(A)!)^{2g_1(G)+|r(A)|}\nonumber
  \end{align}

Let $H$ be the kernel of the natural map $$G'/G_1'\to\text{Aut}((G_1\cap G')/G_1').$$ Since $(G_1\cap G')/G_1'$ is finite, so is $\text{Aut}((G_1\cap G')/G_1')$, hence $[G'/G_1':H]$ is finite.  Also recall $(G_1\cap G')/G_1'$ is abelian, so $(G_1\cap G')/G_1'\subset H$ since it acts trivially on itself via conjugation. So $H$ is a central extension of a polysurface group of length $n-1$ by a finite abelian group:
$$1\to (G_1\cap G')/G_1'\to H \to h(H)\to 1.$$

In particular, 
\begin{align}
[G'/(G_1\cap G'):h(H)]&=[G'/G_1':H]\nonumber\\
&=|\text{Aut}((G_1\cap G')/G_1')|<e(A).\nonumber
\end{align}

Therefore, $h(H)$ is a subgroup of polysurface group $G/G_1$ of length $n-1$ with index strictly smaller than $$ |A|^{2e(A) g_1(G)}\cdot e(A)^2(e(A)!)^{2g_1(G)+|r(A)|}.$$

Applying the induction hypothesis, there is a subgroup $H_0$ of $H$ of index at most $$I_{n-1}((G_1\cap G')/G_1', h(H))$$ with a trivial central extension:
$$1\to (G_1\cap G')/G_1'\to H_0\to H_0'\to 1.$$ 

Note the trivial extension gives an embedding of $H_0'$ into $H_0$ as a finite index subgroup with $$[H_0:H_0']=|(G_1\cap G')/G_1'|.$$ Recall $H_0$ is a finite index subgroup of $H$, $H$ is a finite index subgroup of polysurface group of length $n-1$: $G/G_1$, thus $H_0'$ is a finite index subgroup of the polysurface group $G'/G_1'$ of length $n-1$.\\

In particular, $H_0'<H_0<H< G'/G_1'$, with 
\begin{align}
	[G'/G_1':H_0']&=[G_1/G_1':H]\cdot [H:H_0]\cdot [H_0:H_0'] \nonumber\\
	& <[G_1/G_1':H]\cdot  I_{n-1}((G_1\cap G')/G_1', h(H))\cdot|(G_1\cap G')/G_1'| \nonumber\\
	& \le |\text{Aut}((G_1\cap G')/G_1')|\cdot I_{n-1}((G_1\cap G')/G_1', h(H))\cdot e(A) \nonumber\\
	& <I_{n-1}((G_1\cap G')/G_1', h(H))\cdot e(A)^2 \nonumber
\end{align}
(recall $|G_1/G_1'|=e(A)$). \\


Moreover, since $H_0'$ is a finite index subgroup of the length $n-1$ polysurface group $h(H)$ of index 
\begin{align}
	[h(H):H_0']&=[H:H_0] \nonumber\\
	& \le I_{n-1}((G_1\cap G')/G_1', h(H)).\nonumber
\end{align}
It follows that $H_0'$ is a polysurface group of length $n-1$.\\

Now, coming back to $$1\to A\to \Gamma'/i(G_1')\to G'/G_1'\to 1,$$ restrict $G'/G_1'$ to $H_0'$. Then we get a central extension of a polysurface group of length $n-1$ by a finite abelian group:

$$1\to A\to \Delta_n\to H_0'\to 1,$$ 
where $\Delta_n\subset \Gamma'/i(G_1')$ is the preimage of $H_0'$ in $\Gamma'/i(G_1')$. Thus 
\begin{align}
[\Gamma'/i(G_1'):\Delta_n]&=[G'/G_1':H_0']\nonumber\\
                          &\le I_{n-1}((G_1\cap G')/G_1', h(H))\cdot e(A)^2. \nonumber
\end{align}
Using the induction hypothesis one more time on this new extension, there is a finite index subgroup $M<\Delta_n$ of index at most $I_{n-1}(A,H_0')$, containing $A$, on which the extension is trivial. So there is a finite index subgroup $M<\Gamma'/i(G_1')$ containing $A$, of index at most $$I_{n-1}(A,H_0')\cdot I_{n-1}((G_1\cap G')/G_1', h(H))\cdot e(A)^2,$$ and on which the extension is trivial. This implies that there is a left splitting $M\to A$ for the inclusion $A\to M$. It induces a left-splitting $\Gamma''\to A$ of the preimage $\Gamma''$ of $M$ in $\Gamma'$ with
\begin{align}
 [\Gamma':\Gamma'']&=[\Gamma'/i(G_1'):M]\nonumber\\
                   & <I_{n-1}(A,H_0')\cdot I_{n-1}((G_1\cap G')/G_1', h(H))\cdot e(A)^2. \nonumber
\end{align}
Thus, $\Gamma''<\Gamma$ is a finite index subgroup on which the induced extension
$$1\to A\to \Gamma''\to W\to 1$$
 is trivial, where $W:=\phi(\Gamma'')$. So
\begin{align}

	   [G:W] &=[\Gamma:\Gamma'']\nonumber\\
	         &=[\Gamma:\Gamma'][\Gamma':\Gamma'']\nonumber\\
	         &<|A|^{2e(A) g_1(G)}\cdot e(A)(e(A)!)^{2g_1(G)+|r(A)|}\cdot I_{n-1}(A,H_0')\cdot I_{n-1}((G_1\cap G')/G_1',h(H))\cdot e(A)^2 \nonumber\\
	         &=|A|^{2e(A) g_1(G)}\cdot e(A)^3(e(A)!)^{2g_1(G)+|r(A)|}\cdot I_{n-1}(A,H_0')\cdot I_{n-1}((G_1\cap G')/G_1',h(H)).\nonumber
\end{align}

	


Note $(G_1\cap G')/G_1'$ is a subgroup of $G_1/G_1'$ of order smaller than $e(A)$. Recall $h(H)$ is a subgroup of polysurface group $G/G_1$ of length $n-1$ with index strictly smaller than $$|A|^{2e(A) g_1(G)}\cdot e(A)^2(e(A)!)^{2g_1(G)+|r(A)|},$$ we replace $h(H)$ by a subgroup of polysurface group $G/G_1$, say $J$, of index at most $$|A|^{2e(A) g_1(G)}\cdot e(A)^2(e(A)!)^{2g_1(G)+|r(A)|}.$$ Recall $$[h(H):H_0']\le I_{n-1}((G_1\cap G')/G_1', h(H)),$$ so we replace $H_0'$ by a subgroup of $J$, say $K$, of index $$I_{n-1}((G_1\cap G')/G_1', J)\le I_{n-1}(A, J).$$ Therefore, there is a recursive formula:
$$I_1(A,G)= e(A) $$ and $$I_n(A,G)=e(A)^3|A|^{2e(A) g_1(G)}(e(A)!)^{2g_1(G)+|r(A)|}\cdot I_{n-1}(A,K)\cdot I_{n-1}(A,J),$$
where the first inequality follows from Lemma \ref{le1}. \end{proof}

\section{The proof Theorem \ref{prop} and examples}
Let $X$ be an $n$-dimensional iterated Kodaira fibration with injective monodromy and fibers of genus $g$. In \cite{Py}, Llosa Isenrich and Py constructed a finite covering space $X_2$ of $X$ and a family $Z^*\to X_2$ of closed Riemann surfaces of genus $2 + 4(g- 1)$ whose monodromy representation is injective. They asked what the minimal degree of the covering $X_2 \to X$ of the family $Z^* \to X_2$ is? We give an upper bound.\\


First we review the construction of the $X_2$ above. Given an iterated Kodaira fibration of dimension $n$ with injective monodromy $\pi: X\to Y$ (here by definition, $Y$ is a Kodaira fibration of dimension $n-1$), denote the fundamental group of the fiber (a Riemann surface of genus $g$) by $R$ and denote the kernel of the natural map $R\to H_1(R,\mathbb Z/2\mathbb Z)$ by $R_2$. Then we have (16) of \cite{Py}:
$$1\to H_1(R,\mathbb Z/2\mathbb Z)\to\pi_1(X)/R_2\to\pi_1(Y)\to 1.$$
By Proposition \ref{pro1}, there is a finite cover $Y_a$ of $Y$ so that if $\pi':X_a\to Y_a$\\

\begin{center}
	\begin{tikzcd}
		X_a\arrow[r] \arrow[d,"\pi'"]
		& X \arrow[d,"\pi"] \\
		Y_a\arrow[r]
		& Y
	\end{tikzcd}
\end{center}

\noindent is the pullback then $1\to H_1(R,\mathbb Z/2\mathbb Z)\to\pi_1(X_a)/R_2\to\pi_1(Y_a)\to 1$
is trivial.\\

We then have $$\pi_1(X_a)/R_2\cong \pi_1(Y_a)\oplus H_1(R,\mathbb Z/2\mathbb Z).$$ It gives a surjection $\phi:\pi_1(X_a)\to Z/2\mathbb Z$ which is non-trivial on $R$. Let $X_b$ be the covering space of $X_a$ corresponding to the kernel of $\phi$. The degree of the covering map $f:X_b\to X_a$ is $2$.\\

\begin{lemma}\label{3.1}
	The degree of $X_a\to X$ is at most $$\prod_{i=1}^{g} (2^{2i}-1)2^{2i-1}\cdot \kappa_3(n-1),$$
	where $\mu$ is given by $\mu:\pi_1(Y)\to \mathrm{Aut}(H_1(R,\mathbb Z/2\mathbb Z))$.
\end{lemma}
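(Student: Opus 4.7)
The plan is to split the passage from $\pi_1(Y)$ to $\pi_1(Y_a)$ into two steps, one for each factor of the claimed bound, and then observe that passing to the pullback does not change the degree.

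First, starting with the extension
$$1\to H_1(R,\mathbb Z/2\mathbb Z)\to \pi_1(X)/R_2\to \pi_1(Y)\to 1,$$
I would use the fact that the kernel $H_1(R,\mathbb Z/2\mathbb Z)$ is abelian to observe that conjugation in $\pi_1(X)/R_2$ gives a well-defined action $\mu\colon \pi_1(Y)\to \mathrm{Aut}(H_1(R,\mathbb Z/2\mathbb Z))$ (which coincides with $\mathrm{Out}(H_1(R,\mathbb Z/2\mathbb Z))$ since the group is abelian). Since $H_1(R,\mathbb Z/2\mathbb Z)\cong(\mathbb Z/2\mathbb Z)^{2g}$, its automorphism group is $GL_{2g}(\mathbb F_2)$, which has order $\prod_{i=1}^{2g}(2^{2g}-2^{i-1})$. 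Hence $\ker\mu$ has index at most $\prod_{i=1}^{2g}(2^{2g}-2^{i-1})$ in $\pi_1(Y)$.

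The key point of the first step is that, by definition of $\mu$, pulling the above extension back along $\ker\mu\hookrightarrow \pi_1(Y)$ produces a genuinely \emph{central} extension
$$1\to H_1(R,\mathbb Z/2\mathbb Z)\to \Gamma\to \ker\mu\to 1,$$
so that Proposition \ref{pro1} becomes applicable. Since $\pi_1(Y)$ is a polysurface group of length $n-1$ and $\ker\mu$ has finite index in it, Lemma \ref{genus} ensures that $\ker\mu$ is itself a polysurface group of length $n-1$. Proposition \ref{pro1} then produces a finite index subgroup $W<\ker\mu$ of index at most $I_{n-1}(H_1(R,\mathbb Z/2\mathbb Z),\ker\mu)$ on which the extension becomes trivial.

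Multiplying the two indices yields a subgroup $W<\pi_1(Y)$ of index at most
$$\prod_{i=1}^{2g}(2^{2g}-2^{i-1})\cdot I_{n-1}(H_1(R,\mathbb Z/2\mathbb Z),\ker\mu)$$
over which the extension is split; this subgroup is exactly $\pi_1(Y_a)$ by the construction recalled just before the lemma. Finally, since $X_a$ is defined as the pullback $X\times_Y Y_a$, the covering map $X_a\to X$ has the same degree as $Y_a\to Y$, giving the desired bound. The only non-cosmetic step is verifying that the action by conjugation on $H_1(R,\mathbb Z/2\mathbb Z)$ really does factor through $\pi_1(Y)$ (and thus defines $\mu$); this follows because $H_1(R,\mathbb Z/2\mathbb Z)$ is abelian, so inner automorphisms coming from the kernel are trivial. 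All remaining work is bookkeeping of indices.
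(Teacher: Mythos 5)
Your proposal is correct and follows essentially the same route as the paper: pass to $\ker\mu$ (of index at most $|GL_{2g}(\mathbb F_2)|=\prod_{i=1}^{2g}(2^{2g}-2^{i-1})$) to make the extension central, apply Proposition \ref{pro1} to the resulting central extension over the finite-index polysurface subgroup $\ker\mu$, and multiply the two indices, noting that the pullback construction makes the degree of $X_a\to X$ equal to that of $Y_a\to Y$. Your added justifications (that conjugation descends to $\pi_1(Y)$ because the kernel is abelian, and the pullback degree remark) are correct elaborations of steps the paper leaves implicit.
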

\begin{proof}
	Recall there is an extension $$1\to H_1(R,\mathbb Z/2\mathbb Z)\to\pi_1(X)/R_2\xrightarrow{\psi} \pi_1(Y)\to 1.$$ Pass to the kernel of $$\mu:\pi_1(Y)\to \text{Aut}(H_1(R,\mathbb Z/2\mathbb Z))=\text{GL}(2g,\mathbb Z/2\mathbb Z).$$ Since the image is contained in $\text{Sp}(2g,\mathbb Z/2\mathbb Z)$, the kernel has index at most $$|\text{Sp}(2g,\mathbb Z/2\mathbb Z)|=\prod_{i=1}^{g} (2^{2i}-1)2^{2i-1}.$$ Observe that there is an induced central extension 
	$$1\to H_1(R,\mathbb Z/2\mathbb Z)\to\psi^{-1}(\ker\mu)\xrightarrow{\psi|_{\psi^{-1}(\ker\mu)}}\ker\mu\to 1.$$
	
	Apply the Proposition \ref{pro1} to this new induced central extension. By observing $\ker\mu$ is a finite index subgroup of $\pi_1(Y)$, a polysurface group of length $n-1$, we get the finite index subgroup $\pi_1(X_a)$ of $\pi_1(X)$ with index at most $$I_{n-1}(H_1(R,\mathbb Z/2\mathbb Z),\ker\mu).$$ Therefore, the degree of $X_a\to X$ is at most $$\prod_{i=1}^{g} (2^{2i}-1)2^{2i-1}\cdot I_{n-1}(H_1(R,\mathbb Z/2\mathbb Z),\ker\mu).$$  Note $I_{n-1}(H_1(R,\mathbb Z/2\mathbb Z),\ker\mu)=\kappa_3(n-1)$, so the result follows.
\end{proof}

It is obvious that the degree of $\pi_b:X_b\to X$ is at most $$2\cdot\prod_{i=1}^{g} (2^{2i}-1)2^{2i-1}\cdot \kappa_3(n-1).$$ Moreover, $\pi_b:X_b\to Y_a$ has the structure of
an $n$-dimensional iterated Kodaira fibration with injective monodromy and $X_b$ carries a fixed point free involution $\sigma: X_b\to X_b$ such that $\pi_b \circ \sigma =\pi_b$ and the fibers of $\pi_b$ have genus $1+2(g-1)$. See \cite[Proposition 40]{Py} for the details.\\

Now denote the fundamental group of the fiber, which is a Riemann surface of genus $1+2(g-1)$, of $$\pi_b:X_b\to Y_a$$ by $R'$ and denote the kernel of the natural map $$R'\to H_1(R',\mathbb Z/2\mathbb Z)$$ by $R_2'$, then we have (16) of \cite{Py}:
$$1\to H_1(R',\mathbb Z/2\mathbb Z)\to\pi_1(X)/R_2'\to\pi_1(Y)\to 1.$$

By Proposition \ref{pro1}, there is a finite cover $Y'$ of $Y_a$ so that if $\pi':X'\to Y'$\\

\begin{center}
	\begin{tikzcd}
		X'\arrow[r] \arrow[d,"\pi'"]
		& X_b \arrow[d,"\pi_b"] \\
		Y'\arrow[r]
		& Y_a
	\end{tikzcd}
\end{center}
 
\noindent is the pullback then $$1\to H_1(R',\mathbb Z/2\mathbb Z)\to\pi_1(X')/R_2'\to\pi_1(Y')\to 1$$
is trivial.\\

We then have $$\pi_1(X')/R_2'\cong \pi_1(Y')\oplus H_1(R',\mathbb Z/2\mathbb Z),$$ which gives a surjection $$\phi:\pi_1(X')\to \pi_1(X')/R_2'\to H_1(R',\mathbb Z/2\mathbb Z).$$ Let $X''$ be the covering space of $X'$ corresponding to the kernel of $\phi$. The degree of the covering map $f:X''\to X'$ is $|H_1(R',\mathbb Z/2\mathbb Z)|=2^{4g-2}$ (recall $R'$ is a surface group of genus $2g-1$).\\

Recall there is a holomorphic fixed point free involution $\sigma_b:X_b\to X_b$ so that $\pi_b\circ\sigma=\pi_b$. Let $\sigma': X'\to X'$ be the lift of $\sigma$ which preserves the map $\pi'$. Let $D$ be the
union of the graph of $f$ and of the graph of $\sigma'\circ f$. Note that $D$ naturally sits as a smooth divisor inside the fiber product
$Z:=X''\times_{Y'}X'$. So we can construct a new bundle $Z\setminus D\to Y'$. We denote by $L$ the fundamental group of the fiber of this new
bundle, and by $L_2$ the kernel of the natural map $L\to H_1(L, \mathbb Z/2\mathbb Z)$. Then as (16) of \cite{Py}, we 
can construct an extension:
$$1\to H_1(L,\mathbb Z/2\mathbb Z)\to \pi_1(Z\setminus D)/L_2\to\pi_1(Y')\to 1$$  (\cite[Proposition 41]{Py} ).

\begin{lemma} 
$|H_1(L,\mathbb Z/2\mathbb Z)|=2^{4g+2^{4g}(g-1)+1}$.
\end{lemma}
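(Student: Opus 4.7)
My plan is to identify the fiber of $Z\setminus D\to Y'$ and then compute its first $\mathbb Z/2\mathbb Z$-homology. The fiber over $y\in Y'$ is the open complex surface $F_y:=(X''_y\times X'_y)\setminus (\Gamma_{f_y}\cup\Gamma_{\sigma'_y\circ f_y})$, where $X'_y$ is the Riemann-surface fiber of $\pi'$ (of genus $2g-1$) and $X''_y$ is the cover of $X'_y$ corresponding to $R_2'\le R'$. Projecting to the first factor exhibits $F_y$ as a locally trivial fiber bundle with base $X''_y$ and fiber $X'_y$ minus the two points $f_y(x'')$ and $\sigma'_y(f_y(x''))$; these two points are distinct because $\sigma'$ is fixed-point free.

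First I would compute the genus of $X''_y$ via Riemann--Hurwitz applied to the unramified cover $X''_y\to X'_y$, whose degree equals $|H_1(R',\mathbb Z/2\mathbb Z)|=2^{4g-2}$; this yields $h:=1+2^{4g-1}(g-1)$. The twice-punctured fiber $X'_y\setminus\{2\text{ pts}\}$ has free fundamental group of rank $4g-1$, so the fundamental-group sequence of the fibration gives
\[
1\longrightarrow F_{4g-1}\longrightarrow L\longrightarrow \pi_1(\Sigma_h)\longrightarrow 1.
\]
I would then apply the Lyndon--Hochschild--Serre five-term exact sequence with $\mathbb Z/2\mathbb Z$-coefficients to express $H_1(L,\mathbb Z/2\mathbb Z)$ as an extension of $H_1(\pi_1(\Sigma_h),\mathbb Z/2\mathbb Z)=(\mathbb Z/2\mathbb Z)^{2h}$ by the cokernel of the transgression $H_2(\pi_1(\Sigma_h),\mathbb Z/2\mathbb Z)\to H_1(F_{4g-1},\mathbb Z/2\mathbb Z)_{\pi_1(\Sigma_h)}$, using the monodromy action of $\pi_1(\Sigma_h)$ on $H_1(F_{4g-1},\mathbb Z/2\mathbb Z)=(\mathbb Z/2\mathbb Z)^{4g-1}$.

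Finally I would simplify the resulting exponent of $2$ to match $2g+2^{2g+1}(g-1)+3$. The main obstacle is the monodromy and transgression computation in the Lyndon--Hochschild--Serre spectral sequence: one has to identify the coinvariants of the $\pi_1(\Sigma_h)$-action on $(\mathbb Z/2\mathbb Z)^{4g-1}$ and the image of the transgression precisely, so that the powers of $2$ coming from $|H_1(R',\mathbb Z/2\mathbb Z)|$, Riemann--Hurwitz, and the spectral sequence collapse combine to give the exponent in the statement.
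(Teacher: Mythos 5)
Your identification of the fiber and its structure as a bundle over $X''_y$ with fiber the twice-punctured surface $X'_y\setminus\{f_y(u),\sigma'_y(f_y(u))\}$ is correct, as are the genus count $h=1+2^{4g-1}(g-1)$ and the rank $4g-1$ of the free fundamental group of the punctured fiber. However, the proposal has a genuine gap: you explicitly defer the computation of the $\pi_1(\Sigma_h)$-coinvariants of $H_1(F_{4g-1},\mathbb Z/2\mathbb Z)$ and of the transgression in the five-term exact sequence, calling it ``the main obstacle.'' That computation is not a routine simplification to be filled in later --- it is the entire content of the lemma. The monodromy of the bundle acts on the homology of the punctured fiber through the deck transformations of $X''_y\to X'_y$ and the involution $\sigma'$, and one must show in particular that the meridian class of the divisor $D$ survives as a nontrivial $2$-torsion element (this is where the extra $\mathbb Z/2\mathbb Z$ summand, hence the ``$+1$'' in the exponent, comes from). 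Nothing in the proposal establishes this, so no exponent is actually derived.

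By contrast, the paper's proof bypasses the spectral-sequence analysis entirely by invoking the Lemma on p.~209 of Kodaira's 1967 paper, which computes the first homology of the complement of the two graphs in the product $X''_y\times X'_y$ directly as $H_1(X''_y)\oplus H_1(X'_y)\oplus\mathbb Z/2\mathbb Z$; the lemma then reduces to counting ranks via the genera of $X'_y$ and $X''_y$. If you want to pursue your route, you would essentially be reproving Kodaira's lemma, and you should be aware that a correct bookkeeping along either route gives the exponent $4g+1+2^{4g}(g-1)$ (namely $(4g-2)+\bigl(2^{4g}(g-1)+2\bigr)+1$), which you should reconcile carefully with the exponent appearing in the statement before relying on the formula downstream.
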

\begin{proof}
Observe that $L$ is $Z_y\setminus (Z_y\cap D)=X''_y\times X'_y\setminus (\Gamma\cup\Gamma^*)$, where $y\in Y'$ and $$\Gamma:=\{(f(u),u|u\in X_y'')\}, \Gamma^*:=\{(\sigma'\circ f(u),u|u\in X_y')\}$$ for the covering map $f:X''\to X'$ and $\sigma'$ is an involution. By \cite[Lemma on p.209]{Kodaira1967ACT}, $$H_1(Z_y\setminus (Z_y\cap D)\cong H_1(X''_y)\oplus H_1(X'_y)\oplus\mathbb Z/2\mathbb Z.$$ Observe the fiber of $X'\to Y'$ is the same as the fiber of $X_b\to Y_a$, which is a compact Riemann surface of genus $2g-1$. Thus $$H_1(X'_y,\mathbb Z/2\mathbb Z)=(\mathbb Z/2\mathbb Z)^{4g-2}.$$ Now $X''\to X'$ is a $2^{4g-2}$-degree cover, which implies $X''_y\to X'_y$ is a $2^{4g-2}$-cover. So $X''_y$ is of genus $$2^{4g-2}(2g-2)+1,$$ and $$H_1(X''_y,\mathbb Z/2\mathbb Z)={(\mathbb Z/2\mathbb Z)}^{2^{4g}(g-1)+2}.$$ So $$|H_1(L,\mathbb Z/2\mathbb Z)|=2^{4g+2^{4g}(g-1)+1}.$$
\end{proof}

Now apply Proposition \ref{pro1} again to pass to a finite cover $Y''$ of $Y'$ so that the restriction of the last extension to $\pi_1(Y')$ is trivial. Now, define $X_1,X_2$ via pullbacks:\\

\begin{tikzcd}
	X_1\arrow[r,"q_1"] \arrow[d]
	& Y'' \arrow[d] \\
	X'\arrow[r]
	& Y'
\end{tikzcd} 
\begin{tikzcd}
	X_2\arrow[r,"q_2"] \arrow[d]
	& Y'' \arrow[d] \\
	X''\arrow[r]
	& Y'
\end{tikzcd} \\

The composition of the sequence of finite covers $$X_2\to X''\to X'\to X_b\to X$$ is our finite cover $X_2\to X_b$. Now we compute the degree of $X_2\to X_b$ as follows:\\

\noindent We have seen the degree of $X_b\to X$ is bounded by $$2\cdot\prod_{i=1}^{g} (2^{2i}-1)2^{2i-1}\cdot \kappa_3(n-1).$$\\

\noindent We already know $X''\to X'$ is of degree $2^{4g-2}$. \\

\noindent For the degree of $X'\to X_b$, with the same analysis as Lemma \ref{3.1}, the degree of $X'\to X_b$ is at most $$\prod_{i=1}^{2g-1} (2^{2i}-1)2^{2i-1})\cdot I_{n-1}(H_1(R',\mathbb Z/2\mathbb Z),\ker\mu_a)=\prod_{i=1}^{2g-1} (2^{2i}-1)2^{2i-1})\cdot \kappa_1(n-1),$$
where $\mu_a$ is given by $\mu_a:\pi_1(Y_a)\to \mathrm{Aut}(H_1(R',\mathbb Z/2\mathbb Z))=\text{GL}(4g-2,\mathbb Z/2\mathbb Z)$.\\

\noindent Start with the extension $$1\to H_1(L,\mathbb Z/2\mathbb Z)\to \pi_1(Z-D)/L_2\xrightarrow{\varphi}\pi_1(Y')\to 1.$$
Again, pass to a subgroup of $\pi_1(Y')$,

For the degree of $X_2\to X''$, we have the following lemma.\\

\begin{lemma} 
The degree of $X_2\to X''$ is at most $$\prod_{i=1}^{\tau_0(g)} (\tau(g)-2^{i-1})\cdot \kappa_2(n-1),$$ where $\rho$ is given by $\rho:\pi_1(Y')\to \text{Aut}(H_1(L,\mathbb Z/2\mathbb Z)).$
\end{lemma}
\begin{proof}
Since it is a pullback, the degree of $X_2\to X''$ is the same as the degree of $Y''\to Y'$. Start with the extension $$1\to H_1(L,\mathbb Z/2\mathbb Z)\to \pi_1(Z-D)/L_2\xrightarrow{\varphi}\pi_1(Y')\to 1.$$
Again, pass to a subgroup of $\pi_1(Y')$, the kernel of $\rho$ with index at most 

\begin{align}
|\text{Aut}(H_1(L,\mathbb Z/2\mathbb Z))|&=|\text{Aut}((\mathbb Z/2\mathbb Z)^{4g+2^{4g}(g-1)+1})|\nonumber\\
&=\prod_{i=1}^{\tau_0(g)} (\tau(g)-2^{i-1}), \nonumber
\end{align}

 so that the induced extension 
$$1\to H_1(L,\mathbb Z/2\mathbb Z)\to\varphi^{-1}(\ker\rho)\xrightarrow{\varphi|_{\varphi^{-1}(\ker\rho)}}\ker\rho\to 1$$

is central. Now apply Proposition \ref{pro1} to $\pi_1(Y')$ to get $\pi_1(Y'')$, so that the index is at most $$I_{n-1}( H_1(L,\mathbb Z/2\mathbb Z),\ker\rho)=\kappa_2(n-1).$$ Therefore, the degree of $X_2\to X''$ is at most $$\prod_{i=1}^{\tau_0(g)} (\tau(g)-2^{i-1})\cdot \kappa_2(n-1).$$ 
\end{proof}

Now we can conclude that the minimal degree of $$X_2\to X''\to X'\to X_b\to X,$$ is bounded by
$$ \theta(g)\cdot \kappa_1(n-1)\cdot \kappa_2(n-1)\cdot \kappa_3(n-1),$$ which is the Theorem \ref{prop}.

\begin{remark}
By the last section, there is a recursive formula for $I_n(A,G)$:  \\
$I_1(A,G)= e(A)$ and 
\noindent\begin{align}
	I_n(A,G)&=e(A)^3|A|^{2e(A) g_1(G)}(e(A)!)^{2g_1(G)+|r(A)|}\cdot I_{n-1}(A,H_0')\cdot I_{n-1}(A,h(H))\nonumber\\
            &=8\cdot|A|^{2e(A) g_1(G)}\cdot {2}^{2g_1(G)+|r(A)|}\cdot I_{n-1}(A,H_0')\cdot I_{n-1}(A,h(H)),\nonumber
\end{align}	
where the second equality follows from the observation that in this section we are only considering the groups with $e(A)=2$.

\end{remark}

\begin{notation}
	For the convenience in the expression of the following examples, we define
	$$g_3(\ker\mu):=g_1(\ker\mu)+g_a(\ker\mu)+g_b(\ker\mu),$$ (resp. $g_3(\ker\rho)$ and $g_3(\ker\mu_a)$);
	$$\zeta:=8(2g-1)g_1(\ker\mu_a)+4g+8gg_1(\ker\mu)+4g_1(\ker\rho)+(2g+2^{2g+1}(g-1)+3)+14;$$
	 and $$\xi:=34g+28+2(2g+2^{2g+1}(g-1)+3)+ g_3(\ker\rho)+ (8g+1)g_3(\ker\mu)+(32g-15)g_3(\ker\mu_a).$$
\end{notation}

\begin{example} 
If $X$ is an $2$-dimensional iterated Kodaira fibration with injective monodromy and fibers of genus $g$. Then the finite covering $X_2\to X$ constructed by Llosa Isenrich and Py has degree at most $$2^{3}\cdot  \theta(g).$$ 
\end{example}
\begin{proof}
By Theorem \ref{prop}, the degree is at most $$ \theta(g)\cdot \kappa_1(1)\cdot \kappa_2(1)\cdot \kappa_3(1).$$ By Lemma \ref{le1} and $$\kappa_1(1)=I_{1}((\mathbb Z/2\mathbb Z)^{2g},Z_1))=e((\mathbb Z/2\mathbb Z)^{2g})=2,$$ 
$$\kappa_2(1)=I_{1}( (\mathbb Z/2\mathbb Z)^{\tau_0(g)},Z_2)=e(\mathbb Z/2\mathbb Z)^{\tau_0(g)})=2$$ and 
$$\kappa_3(1)=I_{1}((\mathbb Z/2\mathbb Z)^{4g-2},Z_3)=e((\mathbb Z/2\mathbb Z)^{4g-2})=2,$$ the degree from the statement follows.
\end{proof}

Recall $\ker\mu$ is a finite index subgroup of $\pi_1(Y)$ and $\pi_1(Y)$ is a polysurface group of length $n-1$. Consider the filtration of $\pi_1(Y)$: $$1=\mathcal{Y}_0<\mathcal{Y}_1<...<\mathcal{Y}_{n-1}=\pi_1(Y),$$ which gives the filtration of $\ker\mu$: $$1=\mathcal{Y}_0<\mathcal{Y}_1\cap \ker\mu<...<\mathcal{Y}_{n-1}\cap \ker\mu=\ker\mu.$$ We define $g_{1}(\ker\mu)$ to be the genus of the surface corresponding to the surface group $\mathcal{Y}_1\cap \ker\mu$. Similarly, recall $\ker\rho$ is a finite index subgroup of $\pi_1(Y')$ and $\pi_1(Y')$ is a polysurface group of length $n-1$ and there is a filtration of $\pi_1(Y')$: $$1=\mathcal{Y}_0'<\mathcal{Y}_1'<...<\mathcal{Y}_{n-1}'=\pi_1(Y').$$ We define $g_{1}(\ker\rho)$ to be the genus of the surface corresponding to the surface group $\mathcal{Y}_1\cap \ker\rho$.

\begin{example}  
If $X$ is an $3$-dimensional iterated Kodaira fibration with injective monodromy and fibers of genus $g$. Then the finite covering $X_2\to X$ has degree at most $$2^{\zeta}\cdot \theta(g)\cdot \tau(g)^{4g_{1}(\ker\rho)}.$$
\end{example} 
\begin{proof}
By Theorem \ref{prop}, the degree is at most $$\theta(g) \cdot \kappa_1(2)\cdot \kappa_2(2)\cdot  \kappa_3(2).$$ By Remark \ref{a}, $|H_1(R,\mathbb Z/2\mathbb Z)|=2^{2g}$ and $$e(H_1(R',\mathbb Z/2\mathbb Z))=e(H_1(R,\mathbb Z/2\mathbb Z))=e(H_1(L,\mathbb Z/2\mathbb Z))=2,$$ it follows that

\begin{align}
	\kappa_1(2)&=I_2((\mathbb Z/2\mathbb Z)^{2g},Z_1)\nonumber\\
	&=2^5|(\mathbb Z/2\mathbb Z)^{2g}|^{2\cdot 2 g_1(\ker\mu_a)}2^{2g_1(\ker\mu_a)+|r((\mathbb Z/2\mathbb Z)^{2g})|}\nonumber\\
	&=2^{8(2g-1)g_1(\ker\mu_a)+2g_1(\ker\mu_a)+4g+3};\nonumber
\end{align}

\begin{align}
	\kappa_2(2)&=I_2( (\mathbb Z/2\mathbb Z)^{\tau_0(g)},Z_2)\nonumber\\
	&=2^5|(\mathbb Z/2\mathbb Z)^{\tau_0(g)}|^{2\cdot 2 g_1(\ker\rho)}2^{2g_1(\ker\rho)+|r((\mathbb Z/2\mathbb Z)^{\tau_0(g)})|}\nonumber\\
	&=2^{2g_1(\ker\rho)+(2g+2^{2g+1}(g-1)+3)+5}\cdot \tau(g)^{4g_{1}(\ker\rho)} .\nonumber
\end{align}	

\begin{align}
	 \kappa_3(2)&=I_2((\mathbb Z/2\mathbb Z)^{4g-2},Z_3)\nonumber\\
	&=2^5|(\mathbb Z/2\mathbb Z)^{4g-2}|^{2\cdot 2 g_1(\ker\mu)}2^{2g_1(\ker\mu)+|r((\mathbb Z/2\mathbb Z)^{4g-2})|}\nonumber\\
	&=2^{8gg_1(\ker\mu)+2g_1(\ker\mu)+2g+5};\nonumber
\end{align}

Therefore, the degree is at most $$2^\zeta\cdot \theta(g) \cdot  \tau(g)^{4g_{1}(\ker\rho)}.$$
 \end{proof}


\begin{example}  
If $X$ is an $4$-dimensional iterated Kodaira fibration with injective monodromy and fibers of genus $g$. Then the finite covering $X_2\to X$ constructed by Llosa Isenrich and Py has degree at most $$2^{\xi}\cdot \theta(g) \cdot \tau(g)^{4\cdot g_3(\ker\rho)}.$$

\end{example}

\begin{proof}
By Theorem \ref{prop}, the degree is at most $$\theta(g)\cdot \kappa_1(3)\cdot \kappa_2(3)\cdot  \kappa_3(3).$$ 
By Remark \ref{a}, 
\begin{align}
	\kappa_1(3)&=I_3((\mathbb Z/2\mathbb Z)^{2g},Z_1)\nonumber\\
	&= 2^{13}2^{2(2g-1)\cdot 4\cdot g_3(\ker\mu_a)}\cdot(2!)^{6\cdot 2g(2g-1)+ g_3(\ker\mu_a)}\nonumber\\
	&=2^{8(4g-2)\cdot g_3(\ker\mu_a)+24g+g_3(\ker\mu_a)+1};\nonumber
\end{align}	

\begin{align}

	 \kappa_2(3)&=I_3((\mathbb Z/2\mathbb Z)^{\tau_0(g)},Z_2)\nonumber\\
	      &= 2^{13}\cdot \tau(g)^{2\cdot 2\cdot g_3(\ker\rho)}\cdot(2!)^{2|r(H_1(L,\mathbb Z/2\mathbb Z)|+ g_3(\ker\rho)}\nonumber\\
	&=2^{13+2(2g+2^{2g+1}(g-1)+3)+ g_3(\ker\rho)}\cdot \tau(g)^{4\cdot g_3(\ker\rho)};\nonumber
\end{align}	

\begin{align}
	 \kappa_3(3)&=I_3((\mathbb Z/2\mathbb Z)^{4g-2},Z_3)\nonumber\\
	      &= 2^{13}2^{2g\cdot 4\cdot g_3(\ker\mu)}\cdot(2!)^{6\cdot 2g+ g_3(\ker\mu)}\nonumber\\
	&=2^{8g\cdot g_3(\ker\mu)+12g+g_3(\ker\mu)+13}.\nonumber
\end{align}


Therefore, the degree is at most $$2^{\xi}\cdot \theta(g) \cdot  \tau(g)^{4\cdot g_3(\ker\rho)}.$$ 

\end{proof}


\clearpage

\bibliography{refs} 

\begin{thebibliography}{BHPVdV15}

\bibitem[BHPVdV15]{barth2015compact}
Wolf Barth, Klaus Hulek, Chris Peters, and Antonius Van~de Ven.
\newblock {\em Compact complex surfaces}, volume~4.
\newblock Springer, 2015.

\bibitem[Hal49]{HM}
Marshall Hall.
\newblock Subgroups of finite index in free groups.
\newblock {\em Canadian Journal of Mathematics}, 1(2):187–190, 1949.

\bibitem[Joh94]{johnson1994group}
FEA Johnson.
\newblock A group theoretic analogue of the parshin-arakelov rigidity theorem.
\newblock {\em Archiv der Mathematik}, 63(4):354--361, 1994.

\bibitem[Kas68]{kas1968deformations}
Arnold Kas.
\newblock On deformations of a certain type of irregular algebraic surface.
\newblock {\em American Journal of Mathematics}, 90(3):789--804, 1968.

\bibitem[Kod67]{Kodaira1967ACT}
K.~Kodaira.
\newblock A certain type of irregular algebraic surfaces.
\newblock {\em Journal d’Analyse Math{\'e}matique}, 19:207--215, 1967.

\bibitem[LIP21]{Py}
Claudio Llosa~Isenrich and Pierre Py.
\newblock Mapping class groups, multiple kodaira fibrations, and {CAT(0)}
  spaces.
\newblock {\em Mathematische Annalen}, pages 1--37, 2021.

\bibitem[Mil86]{Miller86thehomology}
Edward~Y. Miller.
\newblock The homology of the mapping class group, 1986.

\end{thebibliography}
\bibliographystyle{alpha} 

\end{document}